\documentclass[11pt,reqno,oneside]{amsart}
\usepackage[totalwidth=14cm,totalheight=20cm, hmarginratio=1:1]{geometry}

\usepackage[english]{babel} 
\usepackage{esint}
\usepackage[T1]{fontenc} 
\usepackage{amsmath}
\usepackage{amssymb} 
\usepackage{float}
\usepackage{amsfonts}
\usepackage{mathtools}
\usepackage{bbm}
\usepackage[utf8]{inputenc}
\usepackage[mathcal]{eucal} 
\usepackage{enumerate}
\usepackage{amsthm} 
\usepackage[hidelinks]{hyperref}
\usepackage{xcolor}

\numberwithin{equation}{section}

\newtheorem{theorem}{Theorem}[section]
\newtheorem{lemma}[theorem]{Lemma}
\newtheorem{proposition}[theorem]{Proposition}
\newtheorem{corollary}[theorem]{Corollary}
\theoremstyle{remark}
\newtheorem{remark}[theorem]{Remark}

\newcommand{\R}{\mathbb{R}}

\newcommand{\SL}{\mathrm{SL}}

\newcommand{\Id}{\mathrm{Id}}

\newcommand{\Ree}{\mathcal{R}e}

\newcommand{\Area}{\mathrm{Area}}

\newcommand{\Teich}{\mathcal T(\Sigma)}
\newcommand{\Qthree}{\mathcal Q^3(\Teich)}

\newcommand{\dd}{\mathrm d}
\newcommand{\tr}{\operatorname{tr}}
\newcommand{\vol}{\operatorname{Vol}}
\newcommand{\Lie}{\mathcal L}
\newcommand{\ip}[2]{\left\langle #1,#2\right\rangle}
\newcommand{\norm}[1]{\left\lVert #1\right\rVert}

\DeclareMathAlphabet{\mathpzc}{OT1}{pzc}{m}{it}

\title[Comparison of pseudo-K\"ahler structures on the Hitchin component]{Global comparison of pseudo-K\"ahler structures on the $\SL(3,\R)$-Hitchin component}

\author{Andrea Tamburelli}
\address{Department of Mathematics,  Universit\`a di Pisa, Pisa, Italy}
\email{andrea\_tamburelli@libero.it}

\begin{document}

\begin{abstract}
We compare the closed $2$-form
$\omega_f$ constructed by Rungi--Tamburelli and Goldman's symplectic form
$\omega_G$ on the $\mathrm{SL}(3,\mathbb R)$-Hitchin component.  In particular, we establish that the semi-pseudo-K\"ahler structure on the
$\SL(3,\R)$-Hitchin component defined by Rungi--Tamburelli is non-degenerate
everywhere and, after aligning the normalization of the Killing form on $\mathfrak{sl}(3,\R)$, coincides with the one recently found by
Collier--Toulisse--Wentworth.
\end{abstract}
\maketitle

\tableofcontents

\section{Introduction}

Recent work of Collier--Toulisse--Wentworth \cite{CTW} constructs a joint moduli
space of Higgs bundles over Teichm\"uller space and produces pseudo-K\"ahler
metrics on several components of character varieties, including rank-two
higher Teichm\"uller spaces; the relevant two-form is induced by the
Atiyah--Bott--Goldman form.  It is worth mentioning that a similar construction was carried out independently by Alvarez-Cónsul--Garcia-Fernandez--García-Prada--Trautwein \cite{CGFGPT} and, for Hitchin components in rank two, the existence of pseudo-K\"ahler structures compatible with Goldman's symplectic form was also proved by El Emam--Sagman \cite{ES}.
Since the Atiyah--Bott--Goldman form depends on the choice of an invariant
bilinear form on the Lie algebra, all comparisons in this note use the
normalization induced by the trace pairing $(X,Y)\longmapsto \tr(XY)$.
Thus, when comparing with other constructions, their forms are understood to
be rescaled, if necessary, to this normalization.

\medskip
\noindent Earlier, Rungi--Tamburelli constructed a closed real $2$-form $\omega_f$ on the $\SL(3,\R)$-Hitchin component by an infinite-dimensional symplectic reduction, extending the para-hyperK\"ahler construction of Mazzoli--Seppi--Tamburelli for the deformation space of maximal globally hyperbolic anti-de Sitter three-manifolds \cite{MST}.  They proved that $\omega_f$ is compatible with the Labourie--Loftin complex structure $I$ and is non-degenerate near the Fuchsian locus \cite{RT}.  It remained open whether $\omega_f$ is non-degenerate on the whole Hitchin component and how it is related to $\omega_G$.  Collier--Toulisse--Wentworth also noted that the relation between the two constructions was unclear.

\medskip
\noindent The purpose of this note is to prove that the Rungi--Tamburelli form
is exactly Goldman's form.  The proof has
two parts.  First, we compute the contraction of both forms with the
infinitesimal generator of the circle action in the fiber of the bundle of holomorphic cubic differentials and prove that their Hamiltonians agree.  Second, we prove a general comparison principle: two closed real $2$-forms compatible with the complex structure $I$ on the total space
of a holomorphic vector bundle which have the same contraction with the
infinitesimal generator of the circle action differ by the pullback of a form on the zero section.  Since the two forms agree on the Fuchsian locus, they agree everywhere.

\subsection{Statement and conventions}

Let $\Sigma$ be a closed oriented surface of genus $g\geq 2$.  We identify the Hitchin component with the total space of the bundle of holomorphic cubic differentials $\Qthree$ over the Teichm\"uller space of $\Sigma$ via the Labourie-Loftin parameterization. The complex structure on this holomorphic vector bundle will be denoted by
$I$. 

\noindent We fix an area form $\rho$ satisfying
\begin{equation}\label{eq:area-normalization}
    \vol(\Sigma,\rho)=-2\pi\chi(\Sigma),
\end{equation}
so that the constant $c=2\pi\chi(\Sigma)/\vol(\Sigma,\rho)$ appearing in the
Rungi--Tamburelli construction is $c=-1$.
For a complex structure $J$, put
\[
    g=g_J:=\rho(\,\cdot\,,J\,\cdot\,).
\]

\noindent We use $q$ for the cubic differential as in Rungi--Tamburelli.  Note that this implies that the Pick differential of the hyperbolic affine sphere is $q_{Pick}=\frac{1}{\sqrt{2}}q$. This can be easily understood from the fact that in the Rungi--Tamburelli conventions the Wang equation takes the form
\[
    \kappa_{h}-\|q\|^{2}_{h}=-1,
\]
where $h$ is the Blaschke metric, whereas $q_{Pick}$ satisfies
\[
    \kappa_{h}-2\|q_{Pick}\|^{2}_{h}=-1 .
\]
Set $A=g^{-1}\Ree(q)$ and
\begin{equation}\label{eq:t-definition}
    t:=\norm{A}_0^2=\frac18\norm{A}_g^2
      =\frac12\norm{q}_g^2.
\end{equation}
The Blaschke metric can be written as
\begin{equation}\label{eq:h-conformal}
    h=e^{F(t)}g,
\end{equation}
where $F$ is determined by
\begin{equation}\label{eq:F-equation}
    -e^{-F(t)}-2t e^{-3F(t)}+1=0,
    \qquad F(0)=0.
\end{equation}

The semi-pseudo-K\"ahler form $\omega_f$ defined by Rungi--Tamburelli is the closed real $2$-form obtained by integrating over $\Sigma$ the following pointwise expression (using the area form $\rho$)
\begin{align}\label{eq:omega-f-formula}
\omega_f\big((\dot J,\dot A),(\dot J',\dot A')\big)
={}&(f(t)-1)\ip{\dot J}{J\dot J'}
 -\frac{f'(t)}{12}\ip{\dot A_{\mathrm{tr}}}{*_J\dot A'_{\mathrm{tr}}}
 -\frac{f'(t)}6\ip{\dot A_0}{\dot A'_0J}.
\end{align}
To clarify the notation, let $(J_s,q_s)$ be a smooth path through
$(J,q)$ and put
\[
    g_s=g_{J_s},\qquad C_s=\Ree(q_s),\qquad A_s=g_s^{-1}C_s.
\]
We write
\[
    \dot C=\left.\frac{\dd}{\dd s}\right|_{s=0}C_s,
    \qquad
    \dot A:=g^{-1}\dot C=g^{-1}\Ree(\dot q).
\]
Thus $\dot A$ denotes the variation of the cubic tensor with the index raised
by the fixed metric $g=g_J$; it is not, in general, the derivative of the path
$A_s$.  The latter will be computed explicitly in Section~\ref{sec:Goldman-Hamiltonian}.
Moreover, the function $f$ appearing in the formula is defined by
\begin{equation}\label{eq:integral_f}
f(t)
=
-t^{1/3}\int_0^t s^{-1/3}F'(s)\,ds,
\qquad f(0)=0.
\end{equation}

\noindent The main result of this note is the following, which answers Question~1 of \cite{RT} and resolves the comparison issue noted in \cite{CTW}.

\begin{theorem}\label{thm:main} The $2$-form $\omega_f$ coincides
with the Goldman symplectic form at every point.  Consequently, $\omega_f$ is
non-degenerate everywhere.  After possibly normalizing the Killing form used to define the Atiyah-Bott-Goldman symplectic form, the associated pseudo-K\"ahler structure coincides with the one
studied by Collier--Toulisse--Wentworth on the $\SL(3,\R)$-Hitchin component.
\end{theorem}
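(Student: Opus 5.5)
The plan follows the two-part strategy announced in the introduction. Let $X$ denote the infinitesimal generator of the fiberwise circle action $(J,q)\mapsto (J,e^{i\theta}q)$ on $\Qthree$; in the notation above, $\dot J=0$ and $\dot q=iq$ along $X$.

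\textbf{Step 1: the contraction of $\omega_f$ with $X$.} Using \eqref{eq:omega-f-formula}, only the cubic terms survive. Since $\dot A_{\mathrm{tr}}=0$ (as $q$ is traceless), the contraction reduces to
\[
\iota_X\omega_f\big((\dot J',\dot A')\big)=-\int_\Sigma\frac{f'(t)}6\ip{A^{\perp}}{\dot A'_0J}\rho ,
\]
where $A^\perp=g^{-1}\Ree(iq)$. I expect this to equal $\dd H_f$ for $H_f=-\int_\Sigma f(t)\rho$, up to a constant, with $\dot J'$ contributing through the variation of $t$. The integral formula \eqref{eq:integral_f} should be precisely what makes this exact, since $f$ solves $3tf'-f=-3tF'$.

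\textbf{Step 2: the contraction of $\omega_G$ with $X$.} On the Goldman side, I would use the known fact that the circle action is Hamiltonian for $\omega_G$, as in the work of Collier--Toulisse--Wentworth and Loftin--McIntosh. The Hamiltonian is, up to normalization, the $L^2$ norm of the Higgs field, equivalently an energy. For Blaschke data I would express it as $\int_\Sigma \|q\|_h^2\,\dd A_h$, which in terms of $t$ and $F$ is $\int_\Sigma 2t e^{-2F(t)}\rho$. This section is the computation I expect to be hardest. It requires writing the flat connection $\nabla=\nabla^h+$ (cubic terms) from the affine sphere, computing $\omega_G$ on tangent vectors represented by variations of $(J,q)$ via the trace pairing, and tracking the derivative of $A_s$ mentioned after \eqref{eq:integral_f}. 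It also requires using \eqref{eq:F-equation} to show
\[
\frac{\dd}{\dd t}\bigl(2te^{-2F}\bigr)\ \text{matches}\ f'(t),
\]
after integrating by parts with $\rho$ fixed. Concretely, I would differentiate \eqref{eq:F-equation} to get $F'$, and verify that $-f$ and the Goldman Hamiltonian agree, both vanishing at $t=0$.

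\textbf{Step 3: the comparison principle and conclusion.} I would set $\eta=\omega_f-\omega_G$, which is closed and $I$-compatible (of type $(1,1)$) with $\iota_X\eta=0$. Then $\Lie_X\eta=0$, and since $IX$ is the Euler (radial) field $E$, type $(1,1)$ together with $\iota_X\eta=0$ gives $\iota_{E}\eta=0$. By closedness, $\eta$ is then invariant under fiberwise scaling by all of $\mathbb C^*$, and is basic. Letting the scaling factor tend to $0$, $\eta=\pi^*(\eta|_{\Teich})$. On the zero section, $\omega_f$ equals Goldman's form, since both restrict to the Weil--Petersson form with the same normalization (known from \cite{RT} near the Fuchsian locus). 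Hence $\eta=0$ on the zero section, and $\eta\equiv0$ everywhere.

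Non-degeneracy then follows from that of $\omega_G$. The last claim follows because both structures are built from the pair $(\omega_G,I)$: the CTW metric is $\omega_G(\cdot,I\cdot)$ with $I$ the Labourie--Loftin complex structure, once the Killing form is rescaled to the trace normalization.
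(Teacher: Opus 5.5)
Your architecture is the paper's: equal Hamiltonians for the fibre rotation, then $\iota_{IX}\Delta=0$ from $I$-compatibility, dilation invariance via Cartan's formula, the limit $s\to0$, and the Weil--Petersson comparison on the zero section. The core of your Step 3 is exactly Lemma~\ref{lem:radial}.

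The gap is the one quantitative input the whole argument rests on, the \emph{exact} identity $\iota_X\omega_G=\iota_X\omega_f$, which you never establish. In Step 1, with $X=(0,AJ)$ one has $\ip{AJ}{\dot A'_0J}=\ip{A}{\dot A'_0}$ and $\dot t=\frac14\ip{A}{\dot A'_0}$, with no $\dot J'$ contribution. Hence $\iota_X\omega_f=\dd\bigl(-\frac23\int_\Sigma f(t)\,\rho\bigr)$ by the chain rule alone. The ODE for $f$ plays no role in exactness; it is needed only to prove $f=-3te^{-2F}$. Your $H_f=-\int_\Sigma f\rho=3\int_\Sigma te^{-2F}\rho$ and your $H_G=\int_\Sigma 2te^{-2F}\rho$ therefore differ by a factor $3/2$. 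So the check that $-f$ agrees with the Goldman Hamiltonian fails as written; indeed $\frac{\dd}{\dd t}(2te^{-2F})=-\frac23 f'$.

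In Step 2 the Goldman Hamiltonian is fixed only ``up to normalization'', and that is precisely what cannot be left open. If $\iota_X\omega_G=\lambda\,\iota_X\omega_f$, Lemma~\ref{lem:radial} gives only $\omega_G-\lambda\omega_f=\pi^*j^*(\omega_G-\lambda\omega_f)$. The horizontal (Weil--Petersson) comparison on the zero section cannot detect $\lambda$. You would need the vertical block at the Fuchsian locus, which is exactly where the convention $q_{Pick}=q/\sqrt2$ matters. Your guess $\int_\Sigma\norm{q}_h^2\,\dd A_h=2\int_\Sigma te^{-2F}\rho$ happens to be correct, but nothing in the proposal determines its coefficient.

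The paper supplies this step, which is the real content, by a direct computation with Goldman's formula on $\nabla=\nabla^h+B$, $B=\frac{e^{-F}}{\sqrt2}A$. The contraction splits as $-\int_\Sigma\tr(BJ\wedge\dot\nabla^h)-\int_\Sigma\tr(BJ\wedge\dot B)$.
\begin{itemize}
\item The first term vanishes (Lemma~\ref{lem:LC-vanishing}). With $\mathcal C(U,V,W)=h(B(U)V,W)$, one has $\tr(BJ\wedge D)=\mathcal C^{ijk}D_{ijk}\,\dd A_h$. The Levi-Civita variation formula turns this into $\frac12\mathcal C^{ijk}\nabla_i\dot h_{jk}$, and a trace-free Codazzi cubic tensor is divergence-free.
\item The second term is purely algebraic. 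It equals $\frac12e^{-2F}(1-2tF')\ip{A}{\dot A_0}\rho$, which integrates to $2\,\dd\mathcal E(Y)$, where $\mathcal E=\int_\Sigma te^{-2F}\rho$.
\end{itemize}

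If you prefer to cite the Higgs-bundle moment map instead, you must carry three normalizations: the trace-pairing normalization of $\omega_G$ against the hyperk\"ahler form, the expression of $\norm{\Phi}^2_{L^2}$ in terms of $(h,q_{Pick})$, and the conversion to Rungi--Tamburelli's $q$. At the very least, pin the coefficient by expanding near $q=0$ and matching with the vertical Fuchsian block $\frac12\int_\Sigma\ip{\dot A_0}{\dot A'_0J}\rho$.
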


The key idea towards the proof of Theorem \ref{thm:main} is showing that the natural circle action given by rotation along the fibers of $\Qthree$ is Hamiltonian for both $\omega_{f}$ and $\omega_{G}$ and they have the same Hamiltonian. The result will then follow by comparing the two forms at the Fuchsian locus and by a general identity principle for pseudo-K\"ahler structures.

\subsection*{Acknowledgment} The author thanks Diego Conti for suggesting that a uniqueness statement for pseudo-K\"ahler symplectic forms may hold, which led to Lemma \ref{lem:radial}. He is also grateful to Nicholas Rungi for his comments on the first draft.

\section{The Hamiltonian circle action for $\omega_{f}$}

We start by recalling the Hamiltonian circle action on $\omega_{f}$. This was already studied in \cite[Theorem C]{RT} (see also \cite[Theorem B]{RTtorus}), but we will need to write the Hamiltonian in a slightly different way using new relations between the functions $f$ and $F$ defined in the introduction.

\begin{lemma}\label{lem:f-explicit}
For $c=-1$, one has
\begin{equation}\label{eq:f-explicit}
    f(t)=-\frac32\big(e^{F(t)}-1\big)
        =-3t e^{-2F(t)}.
\end{equation}
In particular,
\begin{equation}\label{eq:fprime-zero}
    f'(0)=-3.
\end{equation}
\end{lemma}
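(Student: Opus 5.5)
The plan is to treat $F$ as implicitly defined by \eqref{eq:F-equation} and reduce everything to the derivative of $F$. First, the second equality in \eqref{eq:f-explicit} follows directly from \eqref{eq:F-equation}. Multiplying the relation by $e^{F}$ gives $e^{F}-1=2te^{-2F}$. Hence
\[
-\tfrac32(e^{F}-1)=-3te^{-2F}.
\]

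The real content is the first equality, and here I will differentiate \eqref{eq:F-equation} implicitly in $t$:
\[
F'e^{-F}-2e^{-3F}+6tF'e^{-3F}=0 .
\]
This yields
\[
F'(t)=\frac{2e^{-3F}}{e^{-F}+6te^{-3F}}=\frac{2}{e^{2F}+6t}.
\]
Using $2t=e^{3F}-e^{2F}$ (again from \eqref{eq:F-equation}), the denominator becomes $3e^{3F}-2e^{2F}$. Therefore
\[
F'=\frac{2e^{-2F}}{3e^{F}-2}.
\]

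To verify the integral formula \eqref{eq:integral_f}, I would not evaluate the integral directly. Instead I check that $\tilde f:=-\tfrac32(e^{F}-1)$ satisfies the same first-order ODE and the same initial behaviour as $f$. From \eqref{eq:integral_f}, the function $u(t):=t^{-1/3}f(t)$ satisfies $u'=-t^{-1/3}F'$. Equivalently,
\[
f'-\tfrac{1}{3t}f=-F'.
\]
For $\tilde f$ we have $\tilde f'=-\tfrac32e^{F}F'$ and $\tilde f/(3t)=-e^{-2F}$, using the second form of $\tilde f$. So the identity to check is
\[
-\tfrac32e^{F}F'+e^{-2F}=-F', \qquad\text{i.e.}\qquad F'\big(\tfrac32e^{F}-1\big)=e^{-2F}.
\]
This is exactly the formula for $F'$ derived above.

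Since both $f$ and $\tilde f$ are $O(t)$ as $t\to0$, the difference $f-\tilde f$ solves the homogeneous equation and so equals $Ct^{1/3}$. Both functions are $O(t)$, which forces $C=0$. Here $\tilde f=-3te^{-2F}$ is manifestly $O(t)$, while for $f$ it follows from the integral because $F'$ is bounded near $0$.

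Finally, $f'(0)=-3$ comes from $f=-3te^{-2F}$ and $F(0)=0$, giving $f'(0)=-3e^{0}=-3$. The main obstacle is only bookkeeping: simplifying $F'$ using \eqref{eq:F-equation}, and justifying uniqueness near the singular point $t=0$ of the ODE. The uniqueness is handled by the $t^{1/3}$ homogeneous solution argument above.
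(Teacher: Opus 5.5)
Your proof is correct and follows essentially the same route as the paper: you show that $-\frac32(e^{F}-1)$ satisfies the same linear ODE $f'-\frac{f}{3t}=-F'$ as $f$, and then you rule out the homogeneous solution $Ct^{1/3}$ using the behaviour of both functions at $t=0$. The differences are cosmetic: you use the bound $f=O(t)$ where the paper computes $\lim_{t\to0^+}f(t)/t=-3$, and you read off $f'(0)$ from the closed form $-3te^{-2F}$; the smoothness of $F$ near $0$, which you use implicitly when differentiating and bounding $F'$, comes from the Implicit Function Theorem as in the paper.
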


\begin{proof} By the Implicit Function Theorem applied to Equation \eqref{eq:F-equation}, the function $F(t)$ is smooth, $F(0)=0$ and $F'(0)=2$. 
Multiplying \eqref{eq:F-equation} by $e^{3F}$ gives
\begin{equation}\label{eq:F-algebraic}
    2t=e^{2F}(e^F-1),
\end{equation}
so
\begin{equation}\label{eq:eF-minus-one}
    e^F-1=2t e^{-2F}.
\end{equation}
Define $\widetilde f(t)=-\frac32(e^{F(t)}-1)$.  It satisfies
$\widetilde f(0)=0$.  Differentiating \eqref{eq:F-algebraic} gives
\[
    2=e^{2F}(3e^F-2)F'.
\]
Using this identity and \eqref{eq:F-algebraic}, a direct simplification gives
\[
    \widetilde f'
    =-F'+\frac{\widetilde f}{3t}.
\]
On the other hand, differentiating \eqref{eq:integral_f} we see that $f$ satisfies the same first-order differential equation. Moreover, smoothness of $F$ implies that $f$ admits a finite right derivative at $t=0$. Indeed, setting $s=tu$ in Equation \eqref{eq:integral_f}, we have
\begin{align*}
    \lim_{t\to 0^{+}}\frac{f(t)}{t}&=\lim_{t\to 0^{+}}-t^{-2/3}\int_0^t s^{-1/3}F'(s)\,ds \\
    &=\lim_{t\to 0^{+}}-\int_0^1 u^{-1/3}F'(tu)\,ds \\
    &=-F'(0)\int_0^1 u^{-1/3} du = -3 . 
\end{align*}
As a consequence, the function \(u=f-\widetilde f\) admits right derivative at $t=0$ as well and, for $t>0$, is a solution of the first-order differential equation
\[
u'(t)=\frac{u(t)}{3t}.
\]
Since all solutions for $t>0$ are of the form \(u(t)=Ct^{1/3}\) for some constant \(C \in \R\), the only one admitting right derivative at $t=0$ is $u(t)=0$, hence $f(t)=\widetilde{f}(t)$ and Equation \eqref{eq:f-explicit} follows.
\end{proof}

\noindent The circle action on $\Qthree$ is
\begin{equation}\label{eq:circle-action}
    \Psi_\theta(J,q)=(J,e^{-i\theta}q),
\end{equation}
and its infinitesimal generator is
\begin{equation}\label{eq:X-generator}
    X_{(J,A)}=(0,-AJ).
\end{equation}
By \cite[Theorem C]{RT}, its Hamiltonian for $\omega_f$ is
\[
    H_f(J,A)=\frac23\int_\Sigma f(t)\,\rho.
\]
\noindent Define the energy functional
\begin{equation}\label{eq:E-definition}
    \mathcal E(J,A):=\int_\Sigma t e^{-2F(t)}\,\rho.
\end{equation}
Lemma~\ref{lem:f-explicit} immediately yields:

\begin{corollary}\label{cor:Hf}
The Hamiltonian of the circle action for $\omega_f$ is
\begin{equation}\label{eq:Hf-energy}
    H_f=-2\mathcal E,
    \qquad
    \iota_X\omega_f=-2\,\dd\mathcal E.
\end{equation}
\end{corollary}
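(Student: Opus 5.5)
The corollary follows by substituting Lemma~\ref{lem:f-explicit} into the Hamiltonian given by \cite[Theorem C]{RT}. That result tells us that the circle action $\Psi_\theta$ of \eqref{eq:circle-action}, with generator $X$ as in \eqref{eq:X-generator}, is Hamiltonian for $\omega_f$ with Hamiltonian
\[
H_f(J,A)=\frac23\int_\Sigma f(t)\,\rho .
\]
By \eqref{eq:f-explicit} we have the pointwise identity $f(t)=-3te^{-2F(t)}$, valid for every $t\ge 0$ and hence at every point of $\Sigma$ with $t=\frac12\norm{q}_g^2$. Integrating it against $\rho$ gives
\[
H_f=\frac23\int_\Sigma\big(-3te^{-2F(t)}\big)\rho=-2\int_\Sigma te^{-2F(t)}\rho=-2\mathcal E ,
\]
using the definition \eqref{eq:E-definition}.

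For the second identity I will take the Hamiltonian relation in the sign convention of \cite{RT}, which I expect to be $\iota_X\omega_f=\dd H_f$, since the present paper states the corollary in this form. Then
\[
\iota_X\omega_f=\dd H_f=-2\,\dd\mathcal E .
\]
The map $(J,q)\mapsto \mathcal E$ is a smooth function on $\Qthree$, because $F$ is smooth by the Implicit Function Theorem and $t$ depends smoothly on $(J,q)$. So the differential is legitimate and equals that of $H_f$.

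The only genuine point to check is that the normalizations match: that the Hamiltonian in \cite[Theorem C]{RT} is stated with $c=-1$ (fixed here by \eqref{eq:area-normalization}) and with the same $t$ as in \eqref{eq:t-definition}. If the convention there were $\iota_X\omega_f=-\dd H_f$, the sign would flip, so I would verify this against the statement in \cite{RT}. Apart from this bookkeeping the argument is a direct substitution.
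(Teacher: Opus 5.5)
Your proof is correct and is exactly the paper's argument: substitute $f(t)=-3te^{-2F(t)}$ from Lemma~\ref{lem:f-explicit} into the Rungi--Tamburelli Hamiltonian $H_f=\frac23\int_\Sigma f(t)\,\rho$ to get $H_f=-2\mathcal E$, and then read off $\iota_X\omega_f=\dd H_f=-2\,\dd\mathcal E$. The sign convention you flagged, $\iota_X\omega=\dd H$, is the one the paper uses throughout, as in Proposition~\ref{prop:Goldman-Hamiltonian}, so no adjustment is needed.
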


\begin{remark} Lemma~\ref{lem:f-explicit} also clarifies the geometric meaning of the Hamiltonian $H_{f}$. Indeed,
\[
    H_{f}(J,A)=\frac{2}{3}\int_{\Sigma}f(t) \rho =- \int_{\Sigma}(e^{F(t)}-1)\rho =
    -\Area(\Sigma,h)-2\pi\chi(\Sigma),
\]
hence the Hamiltonian can be chosen to be the area of the Blaschke metric. In fact, a Hamiltonian is always defined up to an additive constant and the unpleasant negative sign in the above formula is due to our convention of the circle action being $(J,q) \mapsto (J,e^{-i\theta}q)$, which we committed to in \cite{RTtorus}. This result should be compared to the case of the deformation space of anti-de Sitter structures (\cite{MST}) where the role of the Blaschke metric is played by the induced metric on the maximal surface.
\end{remark}

\section{The corrected comparison on the Fuchsian locus}

Let $j:\Teich\hookrightarrow\Qthree$ be the zero section.  At a point
$(J,0)$, the tangent space decomposes into horizontal and vertical directions.
The tangent vectors satisfy (\cite{RT})
\[
    \operatorname{div}_g\dot J=0,
    \qquad
    \dd^\nabla\dot A_0=0,
\]
and the trace part of $\dot A$ vanishes.\\

We now compare $\omega_f$ with $\omega_G$ on the Fuchsian locus, fixing in the meantime a harmless typo in \cite{RT}. 

\begin{proposition}\label{prop:fuchsian-comparison}
The forms $\omega_f$ and $\omega_G$ agree along the whole tangent space over the Fuchsian locus.
\end{proposition}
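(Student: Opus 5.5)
At a Fuchsian point $(J,0)$ we have $t=0$, so Lemma~\ref{lem:f-explicit} gives $f(0)=0$ and $f'(0)=-3$. Since the trace part of $\dot A$ vanishes, the middle term of \eqref{eq:omega-f-formula} drops out. The pointwise expression therefore reduces to
\[
\omega_f\big((\dot J,\dot A),(\dot J',\dot A')\big)=-\int_\Sigma\ip{\dot J}{J\dot J'}\rho+\frac12\int_\Sigma\ip{\dot A_0}{\dot A'_0J}\rho .
\]
This decomposes as a horizontal block on $T_J\Teich$ (divergence-free $\dot J$) plus a vertical block on the fiber of holomorphic cubic differentials, with no mixed terms. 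The plan is to show that $\omega_G$ has exactly the same structure at $(J,0)$ with the same two blocks.

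\textbf{Step 1 (splitting of $\omega_G$).} I would use the $\SL(2,\R)$-Fuchsian representation $\iota\circ\rho_0$ and decompose $\mathfrak{sl}(3,\R)=\mathfrak{sl}(2,\R)\oplus V_4$ into irreducible $\SL(2,\R)$-modules (dimensions $3$ and $5$). This gives an $\omega_G$-orthogonal decomposition $H^1(\Sigma,\mathfrak{sl}(2)_{\mathrm{Ad}})\oplus H^1(\Sigma,V_4)$. The first summand is the tangent space to the Fuchsian locus $j(\Teich)$; the second is identified via Hitchin/Labourie–Loftin with $H^0(K^3)$, i.e.\ with the vertical direction. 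The orthogonality holds because the trace pairing is $\SL(2)$-invariant and pairs distinct irreducibles trivially. I also need the Labourie–Loftin horizontal and vertical subspaces at $(J,0)$ to match these summands. For the horizontal part this is immediate, since the zero section is the Fuchsian locus. For the vertical part it is the standard first-order computation that the derivative of Wang's equation at $q=0$ along $s\mapsto sq$ has vanishing metric variation, so the deformation lies in the $V_4$-component.

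\textbf{Step 2 (horizontal block).} On $j(\Teich)$ with the trace normalization, $\omega_G$ restricts to a fixed multiple of the Weil–Petersson form, namely the restriction of $\tr$ on $\mathfrak{sl}(2)$ pulled back through the principal embedding, which gives a factor $4$ relative to the $\mathfrak{sl}(2)$ trace form. Goldman's theorem identifies the $\mathfrak{sl}(2)$ form with Weil–Petersson. I would then express Weil–Petersson in terms of $\dot J$ with $g=\rho(\cdot,J\cdot)$ of area $-2\pi\chi$, namely $\omega_{WP}(\dot J,\dot J')=c\int\ip{\dot J}{J\dot J'}\rho$, and check that the constants match to give $-\int\ip{\dot J}{J\dot J'}\rho$.

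\textbf{Step 3 (vertical block).} I would write the first-order flat connection deformation as a harmonic $V_4$-valued $1$-form built from $q$ and its conjugate, in the Higgs-bundle frame $K\oplus\mathcal O\oplus K^{-1}$ with the extra term $q$ in the Higgs field. Then $\omega_G(\dot q,\dot q')=\int\tr(\alpha\wedge\alpha')$ becomes a multiple of $\Re\!\int\dot q\,\overline{\dot q'}/g^2$ with the $i$ appropriately placed. This is to be compared with $\frac12\int\ip{\dot A_0}{\dot A'_0J}\rho$, using $\dot A=g^{-1}\Re\dot q$ and the Pick normalization $q_{Pick}=q/\sqrt2$.

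The main obstacle is the constant bookkeeping in Step 3. The factors involved are the $\sqrt2$ from the Pick normalization, the $\frac18$ in $t=\frac18\norm{A}_g^2$, the relation $\norm{\cdot}_0$ versus $\norm{\cdot}_g$, and the orientation/sign convention of $J$ acting on $\dot A_0$. This is exactly where the ``typo'' in \cite{RT} should surface. As an independent check on the constant, I would also use Corollary~\ref{cor:Hf}: near $q=0$ we have $H_f\approx-2\int t\,\rho=-\int\norm{q}_g^2\rho$, and the vertical block must make the rotation $X=(0,-AJ)$ Hamiltonian with this quadratic Hamiltonian. This pins down the vertical coefficient, including its sign, and I would verify that Goldman's vertical block yields the same quadratic Hamiltonian.
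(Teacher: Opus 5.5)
The reduction of $\omega_f$ at $(J,0)$ to $-\int_\Sigma\ip{\dot J}{J\dot J'}\rho+\frac12\int_\Sigma\ip{\dot A_0}{\dot A'_0J}\rho$ is correct. Your Step~1 is also a valid alternative to the paper's treatment of the mixed block. Since $\mathfrak{sl}(2,\R)$ and $V_4$ are orthogonal for the trace pairing, the Goldman pairing between $H^1(\Sigma,\mathfrak{sl}(2,\R))$ and $H^1(\Sigma,V_4)$ vanishes; the paper instead integrates by parts and uses $\dd^\nabla\dot A'_0=0$.

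There is, however, a genuine gap. The proposition says exactly that the two diagonal constants agree, and you compute neither of them. Step~2 ends with ``check that the constants match'', and Step~3 with ``the main obstacle is the constant bookkeeping''. The routes you chose also bring in normalizations you never fix:
\begin{enumerate}
\item In Step~2, the constant in Goldman's theorem relating the $\mathfrak{sl}(2,\R)$ trace form to $\omega_{\mathrm{WP}}$, which depends on how $\omega_{\mathrm{WP}}$ and the orientation are normalized. Your own constant $c$ is also left open.
\item In Step~3, the factor relating the cubic differential in the Hitchin-section Higgs field to $q_{Pick}=q/\sqrt2$. The normalization of the harmonic metric at the Fuchsian point is likewise unfixed.
\end{enumerate}
These are the same kind of factors whose mishandling caused the error in \cite{RT}. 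As written, the proposition is not proved.

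The Hamiltonian ``check'' does not close the gap. Corollary~\ref{cor:Hf} only returns the vertical coefficient of $\omega_f$, which you already know is $-f'(0)/6=\frac12$; it says nothing about $\omega_G$. You could instead invoke Proposition~\ref{prop:Goldman-Hamiltonian}, which does not depend on the Fuchsian comparison. Linearizing $\iota_X\omega_G=\iota_X\omega_f$ at the zero section then does determine the vertical block, because $A\mapsto X=-AJ$ is a linear isomorphism of the fibre. But the leading term of that computation is $-\int_\Sigma\tr(BJ\wedge\dot B)$ with $B=A/\sqrt2$, which is just the direct computation.

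What you are missing is that no detour is needed. Goldman's formula \eqref{eq:Goldman} for variations of the projectively flat Blaschke connection gives both blocks directly, in the right normalization.
\begin{enumerate}
\item Vertical block: you already noticed that $\dot h=0$ in vertical directions at $q=0$. Hence $\dot\nabla=\dot A_0/\sqrt2$, and $\omega_G(\dot\nabla,\dot\nabla')=\frac12\int_\Sigma\tr(\dot A_0\wedge\dot A'_0)=\frac12\int_\Sigma\ip{\dot A_0}{\dot A'_0J}\rho$ by the frame computation of Lemma~\ref{lem:pointwise-identities}.
\item Horizontal block: $\dot\nabla=-\frac12J\nabla\dot J=-\frac12\nabla(J\dot J)$. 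One integration by parts against the curvature of the hyperbolic metric $g=h$ gives $-\frac12\int_\Sigma\tr(J\dot J\dot J')\rho$, which is the horizontal part of $\omega_f$.
\end{enumerate}
This avoids both Goldman's Weil--Petersson theorem and the Higgs-bundle dictionary.
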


\begin{proof} In \cite{RT} there is a typo in the scaling factor: the correct relation as mentioned before is $q_{Pick}=\frac{1}{\sqrt{2}}q$, hence the Blaschke connection is $\nabla=\nabla^{h}+h^{-1}\Ree(q_{Pick})=\nabla^{h}+h^{-1}\frac{1}{\sqrt{2}}\Ree(q)$. In fact, only with this correct scaling factor the two forms agree along vertical directions. We go over the proof again for the convenience of the reader, but the computation in \cite{RT} is essentially correct. \\

\noindent Goldman's symplectic form admits a description directly in terms of
projectively flat torsion-free affine connections. Let \(\nabla\) be
the Blaschke connection on \(T\Sigma\), and let
\(\alpha,\beta\in\Omega^1(\Sigma,\mathrm{End}(T\Sigma))\) be infinitesimal
variations of \(\nabla\). Then Goldman's symplectic form can be computed as \cite{Goldman}
\begin{equation}\label{eq:Goldman}
\omega_G(\alpha,\beta)
=
\int_\Sigma
\left(
\tr(\alpha\wedge\beta)
-\frac13\tr(\alpha)\wedge\tr(\beta)
\right).
\end{equation}
Under the holonomy correspondence, this agrees with the Goldman form
on the \(\SL(3,\mathbb R)\)-character variety associated with the
invariant pairing \((X,Y)\mapsto\tr(XY)\).
\noindent We apply this formula to variations of the Blaschke connection distinguishing horizontal and vertical variations. \\

\noindent\emph{Vertical--vertical block.}
For vertical vectors $(0,\dot A_0)$ and $(0,\dot A'_0)$, the corresponding
variations of the Blaschke connection are
\[
    \dot\nabla=\frac1{\sqrt2}\dot A_0,
    \qquad
    \dot\nabla'=\frac1{\sqrt2}\dot A'_0.
\]
Goldman's gauge-theoretic formula therefore gives
\begin{align*}
    \omega_G(\dot\nabla,\dot\nabla')
    =\frac12\int_\Sigma\tr(\dot A_0\wedge\dot A'_0)
    =\frac12\int_\Sigma\ip{\dot A_0}{\dot A'_0J}\,\rho.
\end{align*}
On the other hand, \eqref{eq:omega-f-formula} and $f'(0)=-3$ give
\[
    \omega_f\big((0,\dot A_0),(0,\dot A'_0)\big)
    =-\frac{f'(0)}6
      \int_\Sigma\ip{\dot A_0}{\dot A'_0J}\,\rho,
\]
which is the same expression.

\medskip
\noindent\emph{Horizontal--vertical block.}
A horizontal vector $(\dot J,0)$ induces
\[
    \dot\nabla=-\frac12J(\nabla\dot J),
\]
whereas a vertical vector induces $\dot\nabla'=\dot A'_0/\sqrt2$.  Hence
\begin{align*}
\omega_G(\dot\nabla,\dot\nabla')
&=-\frac1{2\sqrt2}
  \int_\Sigma\tr\big(J\nabla\dot J\wedge\dot A'_0\big)\\
&=-\frac1{2\sqrt2}
  \int_\Sigma\tr\big(\nabla(J\dot J)\wedge\dot A'_0\big)\\
&=\frac1{2\sqrt2}
  \int_\Sigma\tr\big(J\dot J\wedge\dd^\nabla\dot A'_0\big)=0.
\end{align*}
The corresponding mixed block of $\omega_f$ vanishes directly from
\eqref{eq:omega-f-formula} at $A=0$.

\medskip
\noindent\emph{Horizontal--horizontal block.}
This part of the calculation is unaffected by the fibre normalization.  For
horizontal vectors $(\dot J,0)$ and $(\dot J',0)$,
\[
    \dot\nabla=-\frac12J(\nabla\dot J),
    \qquad
    \dot\nabla'=-\frac12J(\nabla\dot J').
\]
At the Fuchsian locus, $q=0$ and $F(0)=0$, so the Blaschke metric satisfies
$h=g$.  Wang's equation therefore gives $K_g=-1$, and hence
$R^\nabla=-J\otimes\rho$.  Integrating by parts exactly as in the
proof of Theorem D of \cite{RT},
\begin{align*}
\omega_G(\dot\nabla,\dot\nabla')
&=\frac14\int_\Sigma
  \tr\big(J\nabla\dot J\wedge J\nabla\dot J'\big)\\
&=-\frac14\int_\Sigma
  \tr\big(J\dot J\wedge[-R^\nabla,J\dot J']\big)\\
&=-\frac12\int_\Sigma\tr(J\dot J\dot J')\,\rho\\
&=\omega_f\big((\dot J,0),(\dot J',0)\big).
\end{align*}
The last equality is precisely the horizontal part of
\eqref{eq:omega-f-formula} at $A=0$ and $f(0)=0$.
\end{proof}

\section{The Hamiltonian circle action for $\omega_G$}
\label{sec:Goldman-Hamiltonian}

\noindent We now work at an arbitrary point $(J,A)$.  The Blaschke connection is
\[
     \nabla=\nabla^h+B, 
     \qquad
    B=\frac{e^{-F}}{\sqrt2}A
      =\frac{e^{-F}}{\sqrt2}g_J^{-1}\Ree(q)
\]
We first record carefully how $B$ varies.  Along a path $(J_s,q_s)$ as in the
introduction, differentiating
\[
    g_s(U,V)=\rho(U,J_sV)
\]
gives
\[
    \dot g(U,V)=\rho(U,\dot J V)=g(U,-J\dot J V),
    \qquad
    g^{-1}\dot g=-J\dot J.
\]
Therefore
\begin{equation}\label{eq:actual-A-variation}
    \left.\frac{\dd}{\dd s}\right|_{s=0}A_s
    =-g^{-1}\dot g\,A+g^{-1}\dot C
    =\dot A+J\dot J A.
\end{equation}

\medskip
\noindent Fibre rotation fixes $J$, $t$, $F(t)$, and $h$.  Consequently, the
variation along an orbit of the circle action generated by $X$ is
\begin{equation}\label{eq:B-X-variation}
    \dot B=-BJ,
    \qquad
    \dot\nabla=-BJ.
\end{equation}
For an arbitrary tangent vector $Y=(\dot J,\dot A)$, equations
\eqref{eq:actual-A-variation} and $B=e^{-F}A/\sqrt2$ give
\begin{equation}\label{eq:B-general-variation}
    \dot\nabla=\dot\nabla^h+\dot B,
    \qquad
    \dot B=\frac{e^{-F}}{\sqrt2}
    \bigl(\dot A+J\dot J A-\dot F A\bigr),
    \qquad
    \dot F=F'(t)\dot t.
\end{equation}
Because $B(U)$ is $h$-symmetric while $J$ is $h$-skew-symmetric,
\[
    \tr(B(U)J)=0.
\]
Thus the trace-correction term in Goldman's formula vanishes when one
argument is $X$, and
\begin{equation}\label{eq:Goldman-contraction-split}
\omega_G(X,Y)
=-\int_\Sigma\tr(BJ\wedge\dot\nabla^h)
 -\int_\Sigma\tr(BJ\wedge\dot B).
\end{equation}

\begin{lemma}\label{lem:LC-vanishing}
For every tangent vector $Y$,
\begin{equation}\label{eq:LC-vanishing}
    \int_\Sigma\tr(BJ\wedge\dot\nabla^h)=0.
\end{equation}
\end{lemma}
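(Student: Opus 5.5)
The plan is to write $\dot\nabla^h$ explicitly in terms of the variation of the metric $h$, and then integrate by parts. After the integration by parts, only quantities remain that vanish because $B$ is a traceless Codazzi tensor for $h$.

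First I record the structure of $B$. Since $q$ is holomorphic and $h=e^{F}g_J$ is conformal to $g_J$, the tensor $C=\Ree(q)$ is a totally symmetric, $h$-traceless cubic form satisfying the Codazzi equation $\dd^{\nabla^h}C=0$. Here I use the standard identification of holomorphic cubic differentials with traceless Codazzi cubic tensors, which holds for any metric in the conformal class. Since $B=\frac{e^{-F}}{\sqrt2}A=\frac1{\sqrt2}h^{-1}C$, the endomorphism-valued $1$-form $B$ has the following properties:
\begin{itemize}
\item $\dd^{\nabla^h}B=0$;
\item $B(U)$ is $h$-symmetric and traceless, and $B(U)V=B(V)U$.
\end{itemize}
Because $J$ is $\nabla^h$-parallel, we also get $\dd^{\nabla^h}(BJ)=0$.

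Next I express the variation. Set $\dot h=\frac{\dd}{\dd s}h_s$, where $h_s=e^{F(t_s)}g_{J_s}$. The classical formula for the variation of the Levi-Civita connection is
\[
h(\dot\nabla^h_UV,W)=\tfrac12\big((\nabla^h_U\dot h)(V,W)+(\nabla^h_V\dot h)(U,W)-(\nabla^h_W\dot h)(U,V)\big).
\]
Substituting this into $\tr(B(U)J\,\dot\nabla^h(V))-\tr(B(V)J\,\dot\nabla^h(U))$ gives, pointwise, a contraction of the tensor $\nabla^h\dot h$ against the cubic tensor $C(\cdot,J\cdot,\cdot)$. I then integrate by parts, moving the covariant derivative from $\dot h$ onto $BJ$. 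This reduces the integral to two types of terms.
\begin{itemize}
\item \emph{Terms with a divergence or curl of $BJ$.} These vanish by the Codazzi property $\dd^{\nabla^h}(BJ)=0$, combined with tracelessness, which gives divergence-freeness of the traceless Codazzi tensor $C$ and of its $J$-rotation $\Ree(iq)$.
\item \emph{Terms with a trace of $BJ$.} These vanish because $B(U)$ is $h$-symmetric and traceless, and $J$ is $h$-skew.
\end{itemize}

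A cleaner way to organise the same computation is to split $\dot h=2u\,h+\dot h_0$ into a conformal part and a traceless part.
\begin{itemize}
\item For the conformal part, $\dot\nabla^h_UV=\dd u(U)V+\dd u(V)U-h(U,V)\nabla u$. I will check that $\tr(BJ\wedge\dot\nabla^h)$ is then pointwise a multiple of $\tr(B(\nabla u)J)$-type contractions, together with traces of $B$. By the symmetry $B(U)V=B(V)U$ and the tracelessness of $B$, this should vanish identically, with no integration needed.
\item For the traceless part, $\dot h_0$ is a symmetric traceless $2$-tensor. Here I integrate by parts as above and use $\dd^{\nabla^h}(BJ)=0$, and also its dual, $\delta^{\nabla^h}(BJ)=0$, which follows from the Codazzi condition and tracelessness. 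The bulk terms disappear, and the surviving zeroth-order terms are contractions of $\dot h_0$ with traces of $B$ or $BJ$, which vanish.
\end{itemize}

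The main obstacle is the index bookkeeping in the traceless part. I need to make sure that every term produced by the integration by parts really is either a divergence or curl of $BJ$, or a trace of $B$. Along the way I must not lose a term in which $J$ is paired with $\dot h_0$, since $J$ acts on traceless symmetric tensors as rotation by $90^\circ$. To control this, I will work in a local $h$-orthonormal frame, or in complex coordinates where $C=\Ree(q)$ and $\dot h_0=\Ree(\mu)$ for a quadratic differential $\mu$. In those coordinates the pointwise integrand should become $\Ree$ or $\mathrm{Im}$ of $\partial$ of a $(1,0)$-form, plus a multiple of $\bar\partial q=0$. That exhibits it as exact, and Stokes' theorem then gives the vanishing.
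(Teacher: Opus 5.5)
Your proposal is correct and is essentially the paper's argument: the first-variation formula for the Levi-Civita connection, one integration by parts, and the fact that a trace-free Codazzi cubic tensor is divergence-free.

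The paper avoids the index bookkeeping you flag as the main obstacle. It first uses $B(JU)=B(U)J$ to rewrite the integrand as the full contraction $\mathcal C^{ijk}D_{ijk}\,\dd A_h$, where $\mathcal C(U,V,W)=h(B(U)V,W)$. Total symmetry then collapses the variation formula to $\tfrac12\mathcal C^{ijk}\nabla_i\dot h_{jk}$, so only $\nabla^i\mathcal C_{ijk}=0$ is needed, and neither the conformal/traceless split nor $\Ree(iq)$ enters. One small slip in your complex-coordinate remark: the exact term should be $\bar\partial$ of a $(1,0)$-form, since $\partial$ of a $(1,0)$-form vanishes on a surface.
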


\begin{proof}
Write
\[
    D:=\dot\nabla^h\in
    \Omega^1\bigl(\Sigma,\operatorname{End}(T\Sigma)\bigr)
\]
and define the cubic tensor
\[
    \mathcal{C}(U,V,W):=h(B(U)V,W).
\]
This is the real part of the Pick differential.  In particular,
$\mathcal C$ is totally symmetric and trace-free, and it satisfies the
Codazzi equation
\begin{equation}\label{eq:C-Codazzi}
    \nabla_\ell\mathcal C_{ijk}
    =\nabla_i\mathcal C_{\ell jk}.
\end{equation}
All covariant derivatives in this proof are taken with respect to
$\nabla^h$.

\smallskip
\noindent\emph{Step 1: expansion of the wedge product.}
Fix $p\in\Sigma$ and choose a positively oriented $h$-orthonormal frame
$e_2=Je_1$ near $p$.  For endomorphism-valued one-forms $\alpha$ and $\beta$,
we use the convention
\[
\tr(\alpha\wedge\beta)(e_1,e_2)
=\tr\bigl(\alpha(e_1)\beta(e_2)
          -\alpha(e_2)\beta(e_1)\bigr).
\]
The cubic-differential identities give
\begin{equation}\label{eq:B-complex-linear}
    B(JU)=B(U)J.
\end{equation}
Consequently,
\[
    B(e_1)J=B(e_2),
    \qquad
    B(e_2)J=-B(e_1),
\]
and hence
\begin{align*}
\tr(BJ\wedge D)(e_1,e_2)
&=\tr\bigl(B(e_1)J D_{e_2}-B(e_2)J D_{e_1}\bigr)\\
&=\tr\bigl(B(e_2)D_{e_2}+B(e_1)D_{e_1}\bigr).
\end{align*}
Thus
\begin{equation}\label{eq:wedge-full-contraction}
    \tr(BJ\wedge D)
    =\sum_{i=1}^2\tr\bigl(B(e_i)D_{e_i}\bigr)\,\dd A_h.
\end{equation}
Because each $B(e_i)$ is $h$-symmetric,
\begin{align*}
\tr\bigl(B(e_i)D_{e_i}\bigr)
&=\sum_j h\bigl(B(e_i)D_{e_i}e_j,e_j\bigr)\\
&=\sum_j h\bigl(D_{e_i}e_j,B(e_i)e_j\bigr).
\end{align*}
Put
\[
    D_{ijk}:=h(D_{e_i}e_j,e_k),
    \qquad
    \mathcal C_{ijk}:=h(B(e_i)e_j,e_k).
\]
Equation \eqref{eq:wedge-full-contraction} becomes
\begin{equation}\label{eq:wedge-C-D}
    \tr(BJ\wedge D)
    =\mathcal C^{ijk}D_{ijk}\,\dd A_h.
\end{equation}

\smallskip
\noindent\emph{Step 2: contraction of the Levi-Civita variation formula.}
The standard first-variation formula for the Levi-Civita
connection is
\begin{equation}\label{eq:LC-variation}
2h(D_UV,W)
=(\nabla_U \dot{h})(V,W)+(\nabla_V \dot{h})(U,W)-(\nabla_W \dot{h})(U,V).
\end{equation}
In indices,
\begin{equation}\label{eq:LC-variation-indices}
    2D_{ijk}
    =\nabla_i \dot{h}_{jk}+\nabla_j \dot{h}_{ik}-\nabla_k \dot{h}_{ij}.
\end{equation}
Contracting with the completely symmetric tensor $\mathcal C^{ijk}$ gives
\begin{align*}
2\mathcal C^{ijk}D_{ijk}
={}&\mathcal C^{ijk}\nabla_i k_{jk}
  +\mathcal C^{ijk}\nabla_j k_{ik}
  -\mathcal C^{ijk}\nabla_k k_{ij}.
\end{align*}
The three contractions on the right are equal: in the second one exchange
$i$ and $j$, and in the third make the cyclic relabelling
$(i,j,k)\mapsto(k,i,j)$.  It follows that
\[
    \mathcal C^{ijk}D_{ijk}
    =\frac12\mathcal C^{ijk}\nabla_i k_{jk}.
\]
Together with \eqref{eq:wedge-C-D}, this proves the pointwise identity
\begin{equation}\label{eq:LC-C-contraction}
    \tr(BJ\wedge\dot\nabla^h)
    =\frac12\mathcal C^{ijk}\nabla_i k_{jk}\,\dd A_h.
\end{equation}

\smallskip
\noindent\emph{Step 3: a trace-free Codazzi cubic is divergence-free.}
We claim that
\begin{equation}\label{eq:C-divergence-free}
    \nabla^i\mathcal C_{ijk}=0.
\end{equation}
Indeed, using first the symmetry of $\mathcal C$ and then the Codazzi identity
\eqref{eq:C-Codazzi},
\begin{align*}
\nabla^i\mathcal C_{ijk}
&=h^{i\ell}\nabla_\ell\mathcal C_{ijk}
 =h^{i\ell}\nabla_\ell\mathcal C_{jik}\\
&=h^{i\ell}\nabla_j\mathcal C_{\ell ik}
 =\nabla_j\bigl(h^{i\ell}\mathcal C_{\ell ik}\bigr).
\end{align*}
The last expression vanishes because $\mathcal C$ is trace-free.  This proves
\eqref{eq:C-divergence-free}.

\smallskip
\noindent\emph{Step 4: integration by parts.}
Since $\Sigma$ is closed, Equation \eqref{eq:LC-C-contraction} gives
\begin{align*}
\int_\Sigma\tr(BJ\wedge\dot\nabla^h)
&=\frac12\int_\Sigma
  \mathcal C^{ijk}\nabla_i k_{jk}\,\dd A_h\\
&=-\frac12\int_\Sigma
  (\nabla_i\mathcal C^{ijk})k_{jk}\,\dd A_h=0,
\end{align*}
where the final equality follows from
\eqref{eq:C-divergence-free}.  This proves the lemma.
\end{proof}

\begin{lemma}\label{lem:pointwise-identities}
For every tangent vector $(\dot J,\dot A)$, the following pointwise identities
hold:
\begin{align}
    \tr(AJ\wedge\dot A)
    &=\ip{A}{\dot A_0}\,\rho,
    \label{eq:trace-A-dotA}\\
    \tr(AJ\wedge A)
    &=\norm{A}_g^2\,\rho=8t\,\rho,
    \label{eq:trace-A-A}\\
    \tr\bigl(AJ\wedge J\dot JA\bigr)
    &=0,
    \label{eq:trace-A-JdotJA}\\
    \dot t&=\frac14\ip{A}{\dot A_0}.
    \label{eq:t-variation}
\end{align}
\end{lemma}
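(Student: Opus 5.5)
The plan is to prove all four identities pointwise in a positively oriented $g$-orthonormal frame $e_1$, $e_2=Je_1$, using the same wedge convention as in the proof of Lemma~\ref{lem:LC-vanishing}. Since $A=g^{-1}\Ree(q)$ comes from a cubic differential, it is $g$-symmetric, totally symmetric and trace-free. As in \eqref{eq:B-complex-linear}, it satisfies $A(JU)=A(U)J$. Moreover each $A(U)$ anticommutes with $J$, because a trace-free symmetric endomorphism of the plane anticommutes with the rotation. The variation $\dot A=g^{-1}\Ree(\dot q)$ is a symmetric tensor. I will split it as $\dot A=\dot A_0+\dot A_{\mathrm{tr}}$, where $\dot A_0$ is the trace-free totally symmetric (cubic-differential) part.

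\emph{First identity \eqref{eq:trace-A-dotA}.} Repeating Step~1 of the proof of Lemma~\ref{lem:LC-vanishing} with $B$ replaced by $A$ and $D$ by $\dot A$, I get
\[
\tr(AJ\wedge\dot A)(e_1,e_2)=\sum_i\tr\bigl(A(e_i)\dot A(e_i)\bigr)=\sum_{i,j,k}A_{ijk}\dot A_{ijk}.
\]
Since $A$ is totally symmetric and trace-free, it is orthogonal to the trace part $\dot A_{\mathrm{tr}}$. This leaves $\ip{A}{\dot A_0}\rho$, with $\rho(e_1,e_2)=1$. The only thing to check is that the paper's convention for $\ip{\cdot}{\cdot}$ is the full contraction $g^{ia}g^{jb}g^{kc}$. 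I expect this, since $t=\frac18\norm{A}_g^2$ uses it.

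\emph{Identities \eqref{eq:trace-A-A} and \eqref{eq:trace-A-JdotJA}.} Setting $\dot A=A$ in the first identity gives $\tr(AJ\wedge A)=\norm{A}_g^2\rho=8t\rho$ by \eqref{eq:t-definition}. For the third identity, the same expansion gives $\sum_i\tr\bigl(A(e_i)J\dot J A(e_i)\bigr)=\tr\bigl(J\dot J\sum_iA(e_i)^2\bigr)$ by cyclicity. Now $A(e_2)=A(e_1)J$, and $A(e_1)$ anticommutes with $J$, so
\[
A(e_2)^2=A(e_1)JA(e_1)J=-A(e_1)^2J^2=A(e_1)^2.
\]
Since $A(e_1)$ is symmetric and trace-free, $A(e_1)^2=\lambda\,\Id$ with $\lambda\geq 0$. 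Hence $\sum_iA(e_i)^2$ is a multiple of the identity. It remains to note that $\tr(J\dot J)=0$. This holds because $J\dot J+\dot JJ=0$, which follows from differentiating $J^2=-\Id$. A trace-zero matrix anticommuting with $J$ is symmetric, and symmetric times $J$ has zero trace.

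\emph{Variation of $t$, \eqref{eq:t-variation}.} Here $t=\frac12\norm{q}_g^2$ depends on both $J$ (through $g$) and $q$, so this is the step needing care. The plan is to compute $\dot t$ from $t=\frac18 g^{ia}g^{jb}g^{kc}C_{ijk}C_{abc}$. This gives
\[
\dot t=\frac14\ip{C}{\dot C}_g-\frac38\,\dot g^{ia}\,A_i{}^{jk}A_{ajk}.
\]
The first term equals $\frac14\ip{A}{\dot A}=\frac14\ip{A}{\dot A_0}$. For the second term, $\sum_{j,k}A_{ijk}A_{ajk}$ is $\tr\bigl(A(e_i)A(e_a)\bigr)$, which by the computation above is proportional to $\delta_{ia}$. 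Its pairing with $\dot g=-gJ\dot J$ is therefore a multiple of $\tr(J\dot J)=0$. The anticipated obstacle is bookkeeping: making sure $\dot A$ here is $g^{-1}\dot C$ with $g$ fixed, as stressed in the introduction, and not the derivative of the path $A_s$. The calculation above respects this, because the $J$-dependence is isolated in the $\dot g$ term, which vanishes.
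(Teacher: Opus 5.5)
Your proof is correct. For \eqref{eq:trace-A-dotA}--\eqref{eq:trace-A-JdotJA} it is the paper's argument. You expand in a $g$-orthonormal frame $e_2=Je_1$ using $A(JU)=A(U)J$, and you use that $A$ is trace-free to drop $\dot A_{\rm tr}$. For the third identity you reduce to $\tr\bigl(J\dot J\,(A_1^2+A_2^2)\bigr)$, where $A_1^2+A_2^2$ is scalar and $\tr(J\dot J)=0$.

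The real difference is \eqref{eq:t-variation}. The paper does not prove it and only cites \cite[Lemma 3.22]{RTtorus}. You derive it by differentiating $t=\frac18 g^{ia}g^{jb}g^{kc}C_{ijk}C_{abc}$, and the computation checks out. The $\dot C$-terms give $\frac14\ip{A}{\dot A}$ with $\dot A=g^{-1}\dot C$, which is exactly the paper's fixed-$g$ convention. The $\dot g$-term vanishes because $\tr(A_iA_a)=2\lambda\,\delta_{ia}$ and $\tr(g^{-1}\dot g)=-\tr(J\dot J)=0$. Two points there deserve to be written out. The off-diagonal entry vanishes since $\tr(A_1A_2)=\tr(A_1^2J)=\lambda\tr J=0$. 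The equal diagonal entries are where your observation $A_2^2=A_1^2$ is actually needed. Your route makes the lemma self-contained and shows why the fixed-$g$ convention for $\dot A$ is the right one here.

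Two small remarks. First, $\tr(J\dot J)=0$ follows at once by taking the trace of $J\dot J+\dot JJ=0$, so the detour through symmetry of $\dot J$ is unnecessary. Second, your $\dot A_0$ is not the paper's. In the paper, $\dot A_{\rm tr}$ is the trace part of each endomorphism $\dot A(X)$, a multiple of $\Id$. You instead take the complement of the trace-free totally symmetric part, and the two splittings differ in general when $\dot J\neq 0$ and $A\neq 0$. This does not matter here: $A$ is trace-free in every pair of slots, so it is orthogonal to either trace part, and $\ip{A}{\dot A_0}=\ip{A}{\dot A}$ under both readings.
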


\begin{proof}
Fix a point and choose a positively oriented $g$-orthonormal basis
$e_2=Je_1$.  Put
\[
    A_i:=A(e_i),\qquad \dot A_i:=\dot A(e_i).
\]
The cubic-differential identity $A(JU)=A(U)J$ gives
\[
    A_2=A_1J,\qquad A_1J=A_2,\qquad A_2J=-A_1.
\]
Therefore
\begin{align*}
\tr(AJ\wedge\dot A)(e_1,e_2)
&=\tr(A_1J\dot A_2-A_2J\dot A_1)\\
&=\tr(A_1\dot A_1+A_2\dot A_2)
 =\ip{A}{\dot A}.
\end{align*}
The trace part of $\dot A(X)$ is a scalar multiple of the identity, whereas
each $A(X)$ is trace-free.  Hence $A$ is orthogonal to $\dot A_{\rm tr}$,
which proves \eqref{eq:trace-A-dotA}.  Taking $\dot A=A$ proves
\eqref{eq:trace-A-A}, and the equality with $8t\rho$ follows from the
definition of $t$.

\noindent For \eqref{eq:trace-A-JdotJA}, set $S:=J\dot J$.  Expanding as above and using
cyclicity of the trace gives
\begin{align*}
\tr\bigl(AJ\wedge SA \bigr)(e_1,e_2)
&=\tr(A_2SA_2)+\tr(A_1SA_1)\\
&=\tr\bigl(S(A_1^2+A_2^2)\bigr).
\end{align*}
A trace-free symmetric endomorphism of a two-dimensional Euclidean vector
space has square equal to a scalar multiple of the identity.  Thus
$A_1^2+A_2^2$ is a scalar multiple of the identity.  On the other hand,
differentiating $J^2=-\Id$ and taking the trace gives
$\tr(J\dot J)=0$.  Hence the last expression vanishes.

\noindent Equation \eqref{eq:t-variation} was proved in \cite[Lemma 3.22]{RTtorus}.
\end{proof}

\noindent Thus, using \eqref{eq:B-general-variation}, we find
\begin{align}
\tr(BJ\wedge\dot B)
&=\frac12e^{-2F}
  \left(\tr(AJ\wedge\dot A)-\dot F\tr(AJ\wedge A)\right)
\notag\\
&=\frac12e^{-2F}(1-2tF'(t))
  \ip{A}{\dot A_0}\,\rho.
\label{eq:algebraic-term}
\end{align}
On the other hand, differentiating \eqref{eq:E-definition} gives
\begin{equation}\label{eq:dE}
\dd\mathcal E(Y)
=\frac14\int_\Sigma e^{-2F}(1-2tF'(t))
  \ip{A}{\dot A_0}\,\rho.
\end{equation}
Combining \eqref{eq:Goldman-contraction-split},
Lemma~\ref{lem:LC-vanishing}, \eqref{eq:algebraic-term}, and
\eqref{eq:dE}, we obtain:

\begin{proposition}\label{prop:Goldman-Hamiltonian}
The fibre-rotation action is Hamiltonian for $\omega_G$, with Hamiltonian
\begin{equation}\label{eq:HG}
    H_G=-2\mathcal E.
\end{equation}
Equivalently,
\begin{equation}\label{eq:contractions-equal}
    \iota_X\omega_G=-2\,\dd\mathcal E
    =\iota_X\omega_f.
\end{equation}
\end{proposition}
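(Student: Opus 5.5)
The plan is to assemble the identity from \eqref{eq:Goldman-contraction-split}. By Lemma~\ref{lem:LC-vanishing}, the first integral on the right-hand side vanishes for every tangent vector $Y=(\dot J,\dot A)$. Hence
\[
\omega_G(X,Y)=-\int_\Sigma\tr(BJ\wedge\dot B),
\]
and it remains to evaluate this integral using \eqref{eq:B-general-variation}.

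Substituting $B=e^{-F}A/\sqrt2$ and $\dot B=\frac{e^{-F}}{\sqrt2}(\dot A+J\dot JA-\dot F A)$, I expand the integrand into three terms.
\begin{itemize}
\item The term with $J\dot J A$ vanishes pointwise by \eqref{eq:trace-A-JdotJA}.
\item The term with $\dot A$ equals $\ip{A}{\dot A_0}\rho$ by \eqref{eq:trace-A-dotA}.
\item The term with $\dot F A$ equals $-\dot F\cdot 8t\rho$ by \eqref{eq:trace-A-A}.
\end{itemize}
Using $\dot F=F'(t)\dot t$ and $\dot t=\frac14\ip{A}{\dot A_0}$ from \eqref{eq:t-variation}, these combine into \eqref{eq:algebraic-term}:
\[
\tr(BJ\wedge\dot B)=\tfrac12 e^{-2F}\bigl(1-2tF'(t)\bigr)\ip{A}{\dot A_0}\rho .
\]

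Next I differentiate $\mathcal E$ from \eqref{eq:E-definition}. The area form $\rho$ is fixed, so only the integrand varies:
\[
\frac{\dd}{\dd s}\bigl(te^{-2F(t)}\bigr)=e^{-2F}\bigl(1-2tF'\bigr)\dot t=\tfrac14 e^{-2F}\bigl(1-2tF'\bigr)\ip{A}{\dot A_0}.
\]
This is \eqref{eq:dE}, and comparing it with the previous step gives
\[
\omega_G(X,Y)=-\int_\Sigma \tfrac12 e^{-2F}\bigl(1-2tF'\bigr)\ip{A}{\dot A_0}\rho=-2\,\dd\mathcal E(Y).
\]
Since $Y$ is arbitrary, $\iota_X\omega_G=-2\,\dd\mathcal E$. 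Corollary~\ref{cor:Hf} gives $\iota_X\omega_f=-2\,\dd\mathcal E$, which yields \eqref{eq:contractions-equal}, and $H_G=-2\mathcal E$ is a Hamiltonian.

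The main obstacle is the sign and orientation bookkeeping. Three points need care.
\begin{itemize}
\item I must confirm that $\dot\nabla=-BJ$ along the orbit is consistent with $X=(0,-AJ)$. Fibre rotation leaves $t$ and $F$ unchanged, so only $A$ moves, and $\dot A=-AJ$.
\item The order of the arguments in $\omega_G(X,Y)$ must match the ordering in $\iota_X$.
\item The area form must be the right one. The identities in Lemma~\ref{lem:pointwise-identities} are stated with respect to $g$ and $\rho$, so $\dd A_h$ must not enter here; it appears only in Lemma~\ref{lem:LC-vanishing}, where the integral is zero.
\end{itemize}
A consistency check is to specialize to the vertical directions at $A=0$ and linearize. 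Both sides must then reduce to the Fuchsian computation of Proposition~\ref{prop:fuchsian-comparison}, which confirms the overall factor $\tfrac12$ coming from $q_{Pick}=q/\sqrt2$.
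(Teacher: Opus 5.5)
Your proposal is correct and follows the paper's argument step for step. You split $\omega_G(X,Y)$ via \eqref{eq:Goldman-contraction-split}, discard the Levi-Civita term by Lemma~\ref{lem:LC-vanishing}, evaluate $\tr(BJ\wedge\dot B)$ with Lemma~\ref{lem:pointwise-identities} to obtain \eqref{eq:algebraic-term}, differentiate $\mathcal E$ to get \eqref{eq:dE}, and conclude with Corollary~\ref{cor:Hf}; your sign and area-form remarks and the first-order check near $A=0$ are sound but not needed.
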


\section{A uniqueness lemma}

We now use the global result of El Emam--Sagman \cite{ES}: Goldman's
symplectic form is compatible with the Labourie--Loftin complex structure.
The same is true of $\omega_f$ by construction.

\begin{lemma}\label{lem:radial}
Let $\pi:E\to M$ be a holomorphic vector bundle, let $I$ be its complex
structure, and let $X$ be the infinitesimal generator of the circle action on the fibers.  Suppose that $\Omega_0$
and $\Omega_1$ are closed real $2$-forms on $E$ compatible with $I$ such that
\[
    \iota_X\Omega_0=\iota_X\Omega_1.
\]
Then
\begin{equation}\label{eq:radial-pullback}
    \Omega_0-\Omega_1
    =\pi^*j^*(\Omega_0-\Omega_1),
\end{equation}
where $j:M\hookrightarrow E$ is the zero section.  In particular, if their
restrictions to the zero section agree, then $\Omega_0=\Omega_1$ everywhere.
\end{lemma}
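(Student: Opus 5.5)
The plan is to set $\Omega:=\Omega_0-\Omega_1$, a closed real $2$-form on $E$ that is compatible with $I$, meaning $\Omega(I\cdot,I\cdot)=\Omega$, and satisfies $\iota_X\Omega=0$. I will show that $\Omega$ is invariant under the real dilations of the fibres and then let the dilation factor tend to zero. That limit is exactly the projection $j\circ\pi$ onto the zero section.

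\emph{Step 1: from the circle generator to the radial field.} Let $R$ be the Euler (radial) vector field on $E$, so $R_v=v$ under $T_v(E_{\pi(v)})\cong E_{\pi(v)}$. Its flow is $\phi_s(v)=e^{s}v$. Since $E$ is a holomorphic vector bundle, each fibre is a complex submanifold of $(E,I)$. Moreover, $I$ restricted to the vertical tangent space $T_v(E_{\pi(v)})\cong E_{\pi(v)}$ is multiplication by $i$. The circle action is $v\mapsto e^{\mp i\theta}v$, so $X_v=\mp i v$ and hence $IX=\pm R$; the sign is irrelevant. Compatibility then gives, for every $Y$,
\[
\Omega(IX,Y)=\Omega(I^2X,IY)=-\Omega(X,IY)=0,
\]
so $\iota_R\Omega=0$. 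This identification of $IX$ with $\pm R$ is the one point where the holomorphic structure of the bundle is used. I expect it to be the main (though mild) obstacle: one must check that $I$ acts on vertical vectors by fibrewise multiplication by $i$, which holds because the fibres are complex linear subspaces in holomorphic local trivializations.

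\emph{Step 2: dilation invariance.} By Cartan's formula and $\dd\Omega=0$,
\[
\Lie_R\Omega=\dd\iota_R\Omega+\iota_R\dd\Omega=0.
\]
Hence $\phi_s^*\Omega=\Omega$ for all $s\in\R$. Writing $m_\lambda(v)=\lambda v$, this says $m_\lambda^*\Omega=\Omega$ for every $\lambda>0$.

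\emph{Step 3: the limit $\lambda\to0$.} The family $m_\lambda$, $\lambda\in[0,1]$, is a smooth map $[0,1]\times E\to E$, and $m_0=j\circ\pi$. Therefore, for fixed $v$ and tangent vectors $Y,Z$, the quantity
\[
(m_\lambda^*\Omega)_v(Y,Z)=\Omega_{\lambda v}\bigl(\dd m_\lambda Y,\dd m_\lambda Z\bigr)
\]
depends continuously on $\lambda\in[0,1]$. It is constant equal to $\Omega_v(Y,Z)$ for $\lambda>0$, so letting $\lambda\to0$ gives
\[
\Omega=(j\circ\pi)^*\Omega=\pi^*j^*\Omega,
\]
which is \eqref{eq:radial-pullback}.

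\emph{Final assertion.} If $\Omega_0$ and $\Omega_1$ agree on the zero section, meaning they agree on all tangent vectors at points of $j(M)$ as in Proposition~\ref{prop:fuchsian-comparison}, then in particular $j^*\Omega=0$. Hence $\Omega=0$ everywhere.
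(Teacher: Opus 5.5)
Your proposal is correct and follows essentially the same route as the paper: compatibility turns $\iota_X\Delta=0$ into $\iota_R\Delta=0$ for the Euler field $R=\pm IX$, Cartan's formula gives $m_\lambda^*\Delta=\Delta$ for $\lambda>0$, and letting $\lambda\to 0$ with $m_0=j\circ\pi$ yields $\Delta=\pi^*j^*\Delta$. The differences are minor. You justify $IX=\pm R$ from the fibrewise complex-linear structure of a general holomorphic bundle, whereas the paper cites the explicit formula $I(0,\dot A)=(0,-\dot AJ)$ from \cite{RT}. You also spell out the joint continuity in $\lambda$ that the paper's limit step leaves implicit.
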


\begin{proof}
Set $\Delta=\Omega_0-\Omega_1$.  Then
\[
    \dd\Delta=0,
    \qquad
    \iota_X\Delta=0.
\]
Let $R$ be the infinitesimal generator of the $\R^*$-action $m_{s}:(J,A) \mapsto (J,sA)$.  Since $I(0,\dot{A})=(0,-\dot{A}J)$ (\cite[Equation 4.9]{RT}), we have
\begin{equation}\label{eq:R-IX}
    R=(0,A)=-I(0,-AJ)=-IX.
\end{equation}
Since $\Omega_{0}$ and $\Omega_{1}$ are both compatible with $I$, we have
\[
    \Delta(IU,V)=-\Delta(U,IV).
\]
Therefore, for every $Y$,
\[
    \Delta(R,Y)=-\Delta(IX,Y)=\Delta(X,IY)=0,
\]
so $\iota_R\Delta=0$.  Cartan's formula gives
\[
    \Lie_R\Delta=\dd(\iota_R\Delta)+\iota_R\dd\Delta=0.
\]
Hence, for all $s>0$,
\begin{equation}\label{eq:dilation-invariance}
    m_s^*\Delta=\Delta.
\end{equation}
Taking the limit as $s \to 0$, we see that $m_{0}=j \circ \pi$, thus 
\[
    \Delta=\lim_{s\to 0^+}m_{s}^{*}\Delta=(j\circ \pi)^{*}\Delta=\pi^{*}j^{*}\Delta
\]
as claimed.
\end{proof}

\section{Proof of the global comparison}

\begin{proof}[Proof of Theorem~\ref{thm:main}]
Set
\[
    \Delta:=\omega_f-\omega_G.
\]
Both forms are closed real forms compatible with $I$.  Proposition
\ref{prop:Goldman-Hamiltonian} gives $\iota_X\Delta=0.$
Proposition~\ref{prop:fuchsian-comparison} gives $j^*\Delta=0$.
Applying Lemma~\ref{lem:radial}, we conclude that $\Delta=0$ on the whole
bundle, hence $\omega_f=\omega_G$. Goldman's form is symplectic, so $\omega_f$ is non-degenerate everywhere. Moreover, after normalizing the Atiyah--Bott--Goldman form in the
Collier--Toulisse--Wentworth construction by the same trace pairing, the two
pseudo-K\"ahler structures have the same symplectic form and complex
structure, and therefore coincide.
\end{proof}

 \end{document}